\documentclass[11pt]{article}
\usepackage{graphicx}
\usepackage{amsmath}
\usepackage{amsthm}
\usepackage{amsfonts}
\usepackage{amssymb}
\usepackage{algorithm}
\usepackage{algpseudocode}
\usepackage{fullpage}

\newtheorem{theorem}{Theorem}

\newtheorem{lemma}[theorem]{Lemma}

\theoremstyle{definition}

\newtheorem{example}[theorem]{Example}

\def\beqs{\begin{eqnarray*}}
\DeclareMathOperator\lcs{lcs}
\DeclareMathOperator\simc{sc}

\title{Avoiding approximate repetitions with respect to the longest common subsequence distance}

\author{Serina Camungol and Narad Rampersad\\
Department of Mathematics and Statistics, University of Winnipeg\\
515 Portage Ave., Winnipeg, Manitoba, R3B 2E9 Canada\\
\texttt{\{serina.camungol, narad.rampersad\}@gmail.com}}

\begin{document}
\maketitle
\begin{abstract}
  Ochem, Rampersad, and Shallit gave various examples of infinite
  words avoiding what they called approximate repetitions.  An
  approximate repetition is a factor of the form $xx'$, where $x$ and
  $x'$ are close to being identical.  In their work, they measured the
  similarity of $x$ and $x'$ using either the Hamming distance or the
  edit distance.  In this paper, we show the existence of words
  avoiding approximate repetitions, where the measure of similarity
  between adjacent factors is based on the length of the longest
  common subsequence.  Our principal technique is the so-called
  ``entropy compression'' method, which has its origins in Moser and
  Tardos's algorithmic version of the Lov\'asz local lemma.

(2010 Mathematics Subject Classification: 68R15)
\end{abstract}

\section{Introduction}
A now classical result of Thue \cite{Thu06} showed the existence of an
infinite word over a $3$-letter alphabet avoiding \emph{squares}; that
is, factors of the form $xx$.  Ochem, Rampersad, and Shallit
\cite{ORS08} generalized the work of Thue by constructing infinite
words over a finite alphabet that avoid factors of the form $xx'$,
where $x$ and $x'$ are close to being identical.  In most of their
work, the closeness of $x$ and $x'$ was measured using the Hamming
distance; they also have some results where the edit distance was used
instead.  Here, we measure the closeness of two words based on the
length of their longest common subsequence.

The most common metrics used to measure the distance between strings
are the edit distance, the Hamming distance, and the longest common
subsequence metric.  The edit distance is the most general: it is
defined as the smallest number of single-letter insertions, deletions,
and substitutions needed to transform one string into the other.  The
other two distances can be viewed as restricted versions of the edit
distance:  the Hamming distance (between strings of the same length)
is the edit distance where only the substitution operation is permitted;
the longest common subsequence metric allows only insertions and
deletions.

The study of the longest common subsequence of two (or several)
sequences has a lengthy history (which, at least initially, was
motivated by the biological problem of comparing long protein or
genomic sequences).  For example, in 1975 Chv\'atal and Sankoff \cite{CS75}
explored the following question:  Given two random sequences of length
$n$ over a $k$-letter alphabet, what is the expected length of their
longest common subsequence?  Questions concerning longest common
subsequences in words continue to be studied to this day (see the
recent preprint \cite{BZ13}, for example).

Ochem, Rampersad, and Shallit \cite{ORS08} previously studied the
avoidability of approximate squares with respect to Hamming distance
and edit distance.  Using the longest common subsequence metric has
not yet been done, so it is the aim of this paper to consider the
avoidability of approximate squares with respect to this measure of
distance.

Our main result is non-constructive---indeed it seems to be quite
difficult to find explicit constructions for words avoiding the kinds
of repetitions we consider here---and is based on the the so-called
``entropy compression'' method, which originates from Moser and
Tardos's algorithmic version of the Lov\'asz local lemma \cite{MT10}.
This method has recently been applied very successfully in
combinatorics on words, for instance by \cite{GKM11} and \cite{GKW11}.
Ochem and Pinlou \cite{OP13} also recently resolved a longstanding conjecture of
Cassaigne using this method (this was also accomplished independently
by Blanchet-Sadri and Woodhouse \cite{BSW13} using a different method).

\section{Measuring similarity}

The definitions given in this section are essentially those of Ochem
et al., except that they are based on the longest common
subsequence distance rather than the Hamming distance.

For words $x, x'$, let $\lcs(x,x')$ denote the length of a longest
common subsequence of $x$ and $x'$.
For example, $\lcs({\tt 0120}, {\tt 1220}) = 3$.
Given two words $x, x'$ of the same length,
we define their {\it similarity} $s(x,x')$ by
$$ s(x,x') := \frac{\lcs(x,x')}{|x|} .$$
For example, $s({\tt 20120121}, {\tt 02102012}) = 3/4$.

The {\it similarity coefficient} $\simc(z)$ of a finite word $z$ is defined to be
$$\simc(z) := \max \{s(x,x') : xx' {\rm\ a\ subword\ of\ } z \text{ and } |x|=|x'|\}.$$  If $\simc(z) = \alpha$, we
say that $z$ is {\it $\alpha$-similar}.  If ${\bf z}$ is an infinite word, then its similarity coefficient is defined by
$$\simc({\bf z}) := \sup \{s(x,x') : xx' {\rm\ a\ subword\ of\ } {\bf z} \text{
  and } |x|=|x'|\}.$$ Again, if $\simc({\bf z}) = \alpha$ then we say that
${\bf z}$ is $\alpha$-similar.

\section{Infinite words with low similarity}
\label{sec:algorithm}

Our main result is the following:

\begin{theorem}\label{main_theorem}
Let $0<\alpha<1$ and let $k > 16^{1/{\alpha}}$ be an integer.  Then there exists an infinite word ${\bf z}$ over
an alphabet of size $k$ such that $\simc({\bf z}) \leq \alpha$.
\end{theorem}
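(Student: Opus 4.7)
The plan is a standard entropy compression argument. I would define a randomized greedy algorithm that builds an $\alpha$-similar word $w$ letter by letter: at each step $t$, draw a uniform random letter $r_t$ from $\{1,\ldots,k\}$ and append it to $w$; if the result has a suffix $xx'$ with $|x|=|x'|=m$ and $s(x,x')>\alpha$, pick the smallest such $m$, delete the last $m$ letters, and record $(m,x'_t)$ in a log; otherwise record a ``good-step'' marker. Since appending one letter creates new factors only as suffixes, $w$ stays $\alpha$-similar throughout. The log is engineered so that the pair (final word $w_T$, log) determines the random sequence $r_1\cdots r_T$ uniquely: running backwards, a good entry reveals $r_t$ as the last letter of $w_t$, while a bad entry $(m_t,x'_t)$ restores $w_{t-1}=w_t\cdot x'_t[1..m_t-1]$ and identifies $r_t$ as the last letter of $x'_t$. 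This yields an injection of $\{1,\ldots,k\}^T$ into the set of (word, log) pairs.

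The combinatorial core is bounding the number of (word, log) pairs in which the word length never exceeds $L$. The key lemma is that the number of strings $x'$ of length $m$ with $\lcs(x,x')>\alpha m$ is, given $x$, at most
\[
\binom{m}{\lceil \alpha m\rceil}^2 k^{m-\lceil \alpha m\rceil}\;\leq\;4^m\,k^{(1-\alpha)m},
\]
by choosing the positions of a common subsequence in $x$ and in $x'$ and filling the remaining entries of $x'$ freely. Multiplying over the bad steps, whose sizes sum to $T-L_T$, bounds the ``content'' part of the log by $(4k^{1-\alpha})^{T-L_T}$. The ``skeleton'' of the log---which steps are bad and of what size---is encoded injectively as a nonnegative lattice path by replacing each good step with one up-step and each bad step of size $m$ with one up-step followed by $m$ down-steps; the resulting path has length $2T-L_T$, so at most $4^T$ skeletons are possible. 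Together with $O(k^L)$ choices for $w_T$, the ratio of (word, log) pairs to $k^T$ becomes
\[
\frac{O(k^L)\cdot 4^T\cdot(4k^{1-\alpha})^{T-L}}{k^T}\;=\;O\!\left((k^{\alpha}/4)^L\right)\cdot\left(\frac{16}{k^{\alpha}}\right)^T.
\]

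The hypothesis $k>16^{1/\alpha}$ makes $16/k^{\alpha}<1$, so for each fixed $L$ this bound tends to $0$ as $T\to\infty$. Hence the algorithm produces an $\alpha$-similar word of length at least $L$ with positive probability for every $L$, and an infinite $\alpha$-similar word exists by K\"onig's lemma. The main obstacle is the skeleton count: the per-bad-step content bound $4^m k^{(1-\alpha)m}$ is essentially immediate, but one must organize the interleaving of good and bad steps tightly enough that the combinatorial overhead is only $4^T$; this is what makes the constant come out to $16=4\cdot4$, and a sloppier encoding of the bad-step sizes would weaken the threshold beyond $16^{1/\alpha}$.
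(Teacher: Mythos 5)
Your proposal is correct and follows essentially the same entropy-compression route as the paper: the same delete-$x'$ backtracking algorithm, the same lattice-path skeleton (the paper pads it to a Dyck-type route and uses the Catalan asymptotic $\sim 4^{M}$), and the same content bound $\binom{m}{j}^{2}k^{m-j}$ obtained by recording the positions of a common subsequence in $x$ and in $x'$ plus the free letters of $x'$ (the paper's sequences $X$ and $Y$), yielding the identical threshold $16k^{-\alpha}<1$, i.e., $k>16^{1/\alpha}$. The differences are only bookkeeping (per-bad-step bounds versus the paper's aggregated sum over the number of starred positions), not a different argument.
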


To prove this, we follow the method of Grytczuk, Kozik, and Witkowski \cite{GKW11}.
We begin by defining a randomized algorithm which attempts to
construct a word of length $n$ with similarity coefficient at most
$\alpha$ by a sort of backtracking procedure.

\begin{algorithm}
\caption{Choose a sequence with similarity coefficient at
most $\alpha$}
Input : $n$, $k$, $\alpha$
\begin{algorithmic}[1]
\State $S = \emptyset, i = 1$
\While{$i \leq n$} 
\State randomly choose $y \in \{ 1, \ldots, k \}$ and append $y$ to $S$
\State let $s_{i}$ denote the $i^{th}$ element of $S$
\If{$\simc(s_{1}s_{2} \cdots s_{i}) \leq \alpha$}  
set $i$ to $i+1$
\Else \, $s_{1}s_{2} \cdots s_{i}$ is $\beta-$similar, $\beta>\alpha$, and contains a subword $xx^{\prime}$ such that $|x|=|x^{\prime}|=\ell$, $\ell \leq\frac{i}{2}$ and $s(x, x^{\prime})=\beta$,
say $x=s_{t+1}s_{t+2} \cdots s_{t+\ell}$ and $x^{\prime}=s_{t+\ell+1}s_{t+\ell+2} \cdots s_{t+2\ell}$, where $t+2\ell = i$.
\For {$t+\ell+1 \leq j \leq t+2\ell$}
\State delete $s_{j}$
\EndFor
\State set $i =t+\ell+1$
\EndIf
\EndWhile
\end{algorithmic}
\end{algorithm}

The algorithm generates consecutive terms of a sequence $S$ by choosing symbols at random (uniformly and independently).  Every time a $\beta-$similar subword $xx^{\prime}$ is created, the algorithm erases $x^{\prime}$, to ensure that the $\beta-$similar subword is deleted. 

It is easy to see that the algorithm terminates after a word of length
$n$ with similarity coefficient at most $\alpha$ has been produced.
The general idea is to prove the algorithm cannot continue forever
with all possible evaluations of the random inputs.

Fix a real number $\alpha$.  We will show that for every positive
integer $n$ there exists a word of length $n$ with similarity
coefficient at most $\alpha$.  The existence of an infinite word with
the same property then follows by a standard compactness argument.

Let $n$ be a positive integer, and suppose for the sake of
contradiction that the algorithm fails to produce a sequence of length
$n$; this means the algorithm continues forever.  We are going to count
the possible executions of the algorithm in two ways:

Suppose the algorithm runs for $M$ steps.  By ``step'' we mean appending a letter to the sequence $S$ (which only happens in
line 3).  Let $r_{1}, r_{2}, \ldots, r_{M}$ be the sequence of values
chosen randomly and independently in the first $M$ steps of the
algorithm.  Each $r_{j}$, $1 \leq j \leq M,$ can take $k$ different
values, thus there are $k^{M}$ such sequences.

The second way of counting involves analysing the behaviour of the algorithm.
The following are four elements, defined for every fixed evaluation of the first $M$ random choices of the algorithm.

\begin{itemize}
  \item A route $R$ in the upper right quadrant of the Cartesian plane, going from coordinate $(0,0)$ to coordinate $(2M, 0)$, with possible moves $(1,1)$ and $(1,-1)$, which never goes below the axis $y=0$
  \item A sequence $X$ whose elements correspond to the peaks on the route $R$, where peak is defined as a move $(1,1)$ followed immediately by a move $(1,-1)$
  \item A sequence $Y$ where elements of $Y$ correspond to elements of $X$
  \item A sequence $S$ produced after $M$ steps of the algorithm
\end{itemize}

We call the quartet $\{R, X, Y, S\}$ a log and we encode consecutive steps of the Algorithm in the following way:

Each time the algorithm appends a letter to the sequence $S$, we append a move $(1,1)$ to the route $R$ and everytime an $s_{i}$ is nullified we append $(1,-1)$. Every down step $(1, -1)$ corresponds to an up-step $(1,1)$ so we never reach below the $y$-axis. At the end of computations we add to the route $R$ one down-step for each element of $S$ which was not deleted at any point in the algorithm, bringing us to the point $(2M, 0)$. If a $\beta-$similar word is created, say $xx^{\prime}$, we append a similar version of $x^{\prime}$ to $X$, but replace the elements of the longest common subsequence of $x$ and $x^{\prime}$ with the symbol $*$.  At the end of computations we append to $X$ enough $*$'s so that $|X| = M$ .   We construct $Y$ similarly, but using $x$ instead of $x^{\prime}$ and placing $0$'s in positions that are not part of the longest common subsequence of $x$ and $x^{\prime}$.  Lastly, $S$ is the sequence produced by the Algorithm after making $M$ random selections from $\{ 1, \ldots, k\}$.

\begin{example}
For example, let us choose $\alpha = \frac{37}{50}$.  Then $\lceil 16^{\frac{50}{37}} \rceil = 43$ and we have alphabet $\{1, \ldots, 43\}$ and log $\{ R = \emptyset, X=\emptyset, Y=\emptyset, S=\emptyset \}$.  Suppose we create the word ${\tt 12023431354}$ after 11 steps of the algorithm.  Each of our steps avoids creating a $\beta-$similar word, so at each step we append $(1,1)$ to $R$ and the randomly selected letter to $S$.  Thus we have:
$$\{ R = (1,1)^{11}, X = \emptyset, Y= \emptyset,  S = {\tt 12023431354}\}.$$
Suppose in the 12th step of the algorithm we append `3' to $S$, then our log becomes:
$$\{ R = (1,1)^{12}, X = \emptyset, Y= \emptyset,  S = {\tt 120234313543}\}.$$
Observe that the factor $xx^{\prime} = {\tt 34313543}$ is $\frac{3}{4}-$similar, where $x={\tt 3431}$, ${x^{\prime}={\tt 3543}}$ and the longest common subsequence of $x$ and $x^{\prime}$ is {\tt 343}.  As $\frac{3}{4} > \frac{37}{50}$, we replace the longest common subsequence elements of $x$ and $x^{\prime}$ with $*$'s and we append $*5**$ to $X$ and $***0$ to $Y$.
We then delete $x^{\prime}$ and append to $R$ a $(1,-1)$ for each deleted element.  This results in the following log:
$$\{ R = (1,1)^{12}(1,-1)^{4}, X = *5**, Y= ***0,  S = {\tt 12023431}\}.$$
This is where we conclude our example.
\end{example}

\begin{lemma}\label{algorithm_check}
Every log corresponds to a unique sequence $r_{1}, r_{2}, ..., r_{M}$ of the first $M$ values chosen randomly and independently in some execution of the Algorithm.
\end{lemma}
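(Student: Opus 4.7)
The plan is to prove injectivity of the log encoding by exhibiting an explicit inversion procedure: given any log $\{R, X, Y, S\}$, I will recover the unique random sequence $r_1, r_2, \ldots, r_M$ that could have produced it. The strategy is to replay the algorithm in reverse, reading $R$ from right to left, and to use $X$ and $Y$, together with the letters currently present in the partial sequence, as a dictionary that tells us exactly which letters were deleted at each deletion event.

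The first step is to separate the log into its algorithmic and padding portions. Since the final $S$ has some length $h = |S|$, and since each surviving letter of $S$ contributes exactly one terminal down-step to $R$ and one terminal $*$ to $X$, the last $h$ moves of $R$ are precisely the padding down-steps and the last $h$ entries of $X$ are the padding $*$'s. Trimming these off leaves an algorithmic prefix of $R$ consisting of $M$ up-steps interleaved with $D = M - h$ deletion down-steps, together with prefixes of $X$ and $Y$ of length $D$ whose entries are partitioned into blocks, one block per deletion event.

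Next I will process the algorithmic part of $R$ from right to left, maintaining a running sequence $S_{\mathrm{cur}}$ initialized to $S$, and pointers into $X$ and $Y$ initialized at position $D$. A maximal block of $\ell$ consecutive down-steps encountered this way represents a single deletion event that erased some $x'$ of length $\ell$ with corresponding $x$ still present: that $x$ is exactly the last $\ell$ letters of $S_{\mathrm{cur}}$, and the last $\ell$ unused entries of $X$ and $Y$ form blocks $X_{\mathrm{batch}}$ and $Y_{\mathrm{batch}}$. I reconstruct $x'$ by placing, at each non-$*$ position of $X_{\mathrm{batch}}$, the letter stored there, and at each $*$ position of $X_{\mathrm{batch}}$, the next unused LCS letter read off $x$ at the corresponding $*$ positions of $Y_{\mathrm{batch}}$, in order; then I append this $x'$ to $S_{\mathrm{cur}}$ and rewind the $X, Y$ pointers by $\ell$. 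An up-step encountered in reverse simply undoes an append: the last letter of $S_{\mathrm{cur}}$ is declared to be $r_j$ for that step and is popped. When the algorithmic portion is exhausted, $S_{\mathrm{cur}}$ is empty and every $r_j$ has been determined.

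The main obstacle is to confirm that this inversion is well-defined at every stage, in particular that $X_{\mathrm{batch}}$ and $Y_{\mathrm{batch}}$ always contain the same number of $*$'s (namely the length of the LCS for that event), so that pairing $*$'s in $Y$ with $*$'s in $X$ is unambiguous, and that the reconstructed $x'$ is consistent with the forward algorithm's having produced a $\beta$-similar subword $xx'$ at that moment. Both facts are direct consequences of how the log was constructed in the first place: the algorithm appends to $X$ and $Y$ at the same positions with matching $*$-patterns, and it only issues the deletion that produced those entries precisely when $s(x,x') > \alpha$. So this verification is routine bookkeeping rather than a substantive difficulty, and the lemma follows.
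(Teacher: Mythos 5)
Your proposal is correct and follows essentially the same route as the paper's proof: decode the random values by reading $R$ in reverse, recovering each deleted block $x'$ from the corresponding blocks of $X$ and $Y$ together with the copy of $x$ still present in the current sequence, and reading off $r_j$ at each up-step. The only cosmetic difference is that you fold the paper's ``preparatory'' forward pass (locating deleted positions and trimming the padding) into a single reverse pass with an explicit invariant, which if anything makes the bookkeeping cleaner.
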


\begin{proof}
Before we decode $r_{1}, r_{2}, ..., r_{M}$, we do some preparatory analysis.  We construct a sequence $D=\{d_{1}, d_{2}, \ldots, d_{p}\}$, 
corresponding to the lengths of consecutive down-steps, $(1, -1)$, of $R$.  We then delete the last $M - (d_{1} + d_{2}+ \cdots +d_{p})$ $*$'s from $X$.  Doing so results in a sequence  $X=\{x_{1}x_{2} \cdots x_{N}\}$ where each element of $X$ corresponds to the down-steps of $R$, it then follows that $d_{1} + d_{2}+ \cdots +d_{p} = |X| = N$.  We split up $X$ to form a new set, $X^{\prime}$, where the length of each partitioned block corresponds to an element of $D$, so that $$X^{\prime}=\{x_{1}x_{2} \cdots x_{d_{1}}, \; x_{d_{1}+1}x_{d_{1}+2} \cdots x_{d_{1}+d_{2}}, \; \ldots, x_{N-d_{p}+1}x_{N-d_{p}+2} \cdots x_{N}\}.$$  Note that $|X^{\prime}|=|D|$, so that every element in $X^{\prime}$ couples with an element in $D$.  We do the same process for the sequence $Y$, obtaining a new sequence $$Y^{\prime}=\{y_{1}y_{2} \cdots y_{d_{1}}, \; y_{d_{1}+1}y_{d_{1}+2} \cdots y_{d_{1}+d_{2}}, \; \ldots, y_{N-d_{p}+1}y_{N-d_{p}+2} \cdots y_{N}\}.$$

Next we use information from route $R$ to determine which $s_{i}$, $1 \leq i \leq n$, were not nullified at each step of the Algorithm and to find the coordinates of the blocks which were nullified at step $(8)$ of the Algorithm.  Notice that appending some letter from $\{ 1, \ldots, k \}$ to $S$ corresponds to some up-step $(1,1)$ on the route $R$, while deleting an $s_{i}$ corresponds to some down-step $(1,-1)$ on the route $R$.  We analyse the route $R$, starting from the point $(0,0)$ to the point $(2M, 0)$.  Assume the first peak occurs between the $j^{th}$ and $(j+1)^{th}$ step.  As this is the first time we erase elements $s_{i}$ and we know that $s_{1}, \ldots, s_{j}$ are the only non-deleted elements at this point.  From the number of down-steps on $R$ we deduce the length of the nullified similar block, say there are $d_{1}$ down-steps, and remember that for this peak we deleted $s_{j-d_{1}+1}, s_{j-d_{1}+2}, \ldots, s_{j}$.  Now again each up-step on $R$ denotes appending some value of $\{ 1, \ldots, k \}$ to $S$.  
Continuing on in this manner, we are able to determine exactly which position was set last as we reach the next peak.  From this information it is easy to determine which positions were nullified as a result of erasing the repetition.  We repeat these operations until we reach the end of the route $R$.


After these preparatory measures we are ready to decode $r_{1}, r_{2}, \ldots, r_{M}$.  We consider the sequence $R$ in reverse order, from the point $(2M,0)$ to the point $(0,0)$, modifying the sequences $X^{\prime}$ and $Y^{\prime}$ from the preparatory step and the final sequence $S$. We use information encoded in $S$, $X^{\prime}$ and $Y^{\prime}$ as well as knowledge from the preparatory step.  

First we consider the up-steps $(1,1)$ and note than each up-step corresponds to some $r_{i}$.  In the preparatory analysis we determined the indices of elements $r_{i}$ in $S$ so, each time there is an up-step on $R$, we assign to $r_{j}$ a value from appropriate $s_{i}$ (where $i$ was determined in the preparatory step), and delete $s_{i}$.  

Now we consider the down-steps of $R$.  At the beginning of $R$ there is some number of down-steps corresponding to the last non-deleted elements of $S$ (the elements added at the end of computations), we skip these elements and move on.  The first block of down-steps that follow an up-step has length $d_{p}$ and corresponds to the last element of $X^{\prime}$, say $X^{\prime}_{N}$ as well as the last element of $Y^{\prime}$, say $Y^{\prime}_{N}$.  We compare the sequence $s_{i-d_{p}}, s_{i-d_{p}+1}, \ldots, s_{i-1}$, to $Y^{\prime}_{N}$, and the sequence $s_{i}, s_{i+1}, \ldots, s_{i+d_{p}-1}$ to $X^{\prime}_{N}$, where $s_{i}$ is the first element of the erased similar block determined in the preparatory step.

Together, the indexed elements of $s_{i-d_{p}}, s_{i-d_{p}+1}, \ldots, s_{i-1}$ that correspond to the $*$ elements of $Y^{\prime}_{N}$ form the nullified longest common subsquence, call this sequence $LCS$.  $LCS$ also corresponds to the $*$ elements of $X^{\prime}_{N}$, so we can replace the $*$ elements of $X^{\prime}_{N}$.  Now $X^{\prime}_{N}$ is the last deleted block of the Algorithm.  We now replace $s_{i}, s_{i+1}, \ldots, s_{i+d_{p}-1}$ with the newly changed $X^{\prime}_{N}$, altering the sequence $S$.  Continuing in this manner, we are able to retrieve all deleted blocks of the Algorithm: they are the elements of the sequence $X^{\prime}$.
\end{proof}

We have just shown that there is an injective mapping between the set of all sequences of randomly chosen values during the execution of the algorithm and the set of all logs.  Consequently, the number of different logs is always greater or equal to the number of possible sequences $r_{1}, r_{2}, r_{3}, \ldots, r_{M}$.  We now derive an upper bound for the number of possible logs.

The number of possible routes $R$, of length $2M$ and possible moves $(1,1)$ and $(1,-1)$, in the upper right quadrant of the Cartesian plane is the $M^{th}$ Catalan number $C_{M}$.

To count $X$ we first note that $|X|=M$ and that each deleted factor $x^{\prime}$ has (strictly) more than $\alpha|x^{\prime}|$ positions $*$, so it follows that $X$ has more than $\alpha M$ positions $*$.  Let $j$ be the number of $*$'s in $X$.  There are $k$ choices for the $M-j$ non-$*$ positions in $X$, so there are $\binom{M}{j}k^{M-j}$ possibilities for $X$.  Now if $X$ has $j$ $*$'s, then so does $Y$, and the remaining positions in $Y$ are $0$'s.  Thus, there are $\binom{M}{j}$ possibilities for $Y$, and hence $\binom{M}{j}^2k^{M-j}$ possibilities for the pair $(X,Y)$.  Summing over all $j$, we conclude that there are
$${\displaystyle\sum\limits_{j=\lceil \alpha M \rceil}^{M} \binom{M}{j}^{2} k^{M-j}}$$
possibilities for the pair $(X,Y)$.

The sequence $S$ consists of $\leq n$ elements of value between $1$ and $k$, so there are $\frac{k^{n+1}-1}{k-1}$ possible sequences $S$.

Multiplying these individual bounds together brings us to the conclusion that the number of possible logs is at most
$$\frac{k^{n+1}-1}{k-1}C_{M}{\displaystyle\sum\limits_{j=\lceil \alpha M \rceil}^{M} \binom{M}{j}^{2} k^{M-j}}.$$

Comparing with the number $k^M$ of possible choices for the sequence $r_1,r_2,\ldots,r_M$ we get the inequality
$$k^M \leq \frac{k^{n+1}-1}{k-1}C_{M}{\displaystyle\sum\limits_{j=\lceil \alpha M \rceil}^{M} \binom{M}{j}^{2} k^{M-j}}.$$

Asymptotically, the Catalan numbers $C_{M}$ satisfy
$$C_{M} \sim \frac{4^{M}}{M\sqrt{\pi M}},$$
and $\binom{M}{j} < 2^{M}$, which implies that
$$k^M \ll \frac{k^{n+1}-1}{k-1}\frac{4^{M}}{M\sqrt{\pi
    M}}{\displaystyle\sum\limits_{j=\lceil \alpha M \rceil}^{M}
  (2^{M})^{2} k^{M-j}}.$$

Simplifying we get that
\begin{align}
k^M & \ll \frac{k^{n+1}-1}{k-1}\frac{4^{M}}{M\sqrt{\pi M}} 4^{M}
{\displaystyle\sum\limits_{j=\lceil \alpha M \rceil}^{M} k^{M-j}} \nonumber\\
& = \frac{k^{n+1}-1}{k-1}\frac{16^{M}}{M\sqrt{\pi M}} {\displaystyle\sum\limits_{j=0}^{M - \lceil \alpha M \rceil} k^{j}}\nonumber\\
& = \frac{16^{M}}{M\sqrt{\pi M}}\frac{(k^{n+1}-1)(k^{M - \lceil \alpha M \rceil +1}-1)}{(k-1)^2} \nonumber\\
& \leq k^{n+2}\frac{16^{M}}{M\sqrt{\pi M}} \frac{k^{M(1-\alpha)}}{(k-1)^{2}} \label{logs_ineq}
\end{align}

We claim that when $k > 16^{1/\alpha}$, the right hand side of
  \eqref{logs_ineq} is $o(k^{M})$ and therefore for large enough $M$
  the inequality \eqref{logs_ineq} cannot hold.
This contradiction implies that for some specific choices of $r_{1},
r_{2}, \ldots$ the Algorithm stops (i.e., produces a word of length
$n$ with similarity coefficient at most $\alpha$).

In order to verify the choice of $k$ needed to obtain the
contradiction described above; we need 
$k^{n+2}\frac{16^{M}}{M\sqrt{\pi M}} \frac{k^{M(1-\alpha)}}{(k-1)^{2}}$
to be $o(k^{M})$.
We can disregard the term $\frac{k^{n+2}}{M\sqrt{\pi M}(k-1)^2}$, as it
approaches 0 as M approaches infinity, and concentrate on the term
$16^{M}k^{M(1-\alpha)}$.  We wish to determine the values of $k$ that
satisfy
$$\lim\limits_{M \to \infty} \frac{16^{M}k^{M(1-\alpha)}}{k^M} = 0,$$
or more simply
$$\lim\limits_{M \to \infty} 16^{M}k^{-M\alpha} = 0.$$
This will hold if 
\[
M\log16 - \alpha M\log k < 0,
\]
which holds whenever
\[
\log k > \frac{\log16}{\alpha},
\]
or, in other words, whenever
\[
k > 16^{1/\alpha}.
\]
This completes the proof of Theorem~\ref{main_theorem}.

\section{Similarity coefficients for small alphabets}

Almost certainly, the bound of $16^{1/\alpha}$ for the size of the
alphabet needed to obtain an infinite word with similarity coefficient
at most $\alpha$ is far larger than the true optimal alphabet size.
For example, for $\alpha = 0.9$ we get an alphabet size of $22$, which
is surely much larger than necessary.  In this section we investigate
the following question: Given an alphabet $\Sigma$ of size $k$, what
is the smallest similarity coefficient possible over all infinite
words over $\Sigma$?  Implementing an algorithm similar to that of
Section~3 allows us to get an idea of which values of $\alpha$, $0 <
\alpha < 1,$ are avoidable and unavoidable.  Given a similarity
coefficient $\alpha$ to avoid, a length $n$, and an alphabet size $k$,
the algorithm starts at 0 and appends letters until a word of length
$n$ with similarity coefficient $< \alpha$ is obtained.  If a factor
with similarity coefficient $\geq \alpha$ is created, the last
appended letter is deleted.  If appending no other letter avoids
$\alpha$, the algorithm deletes yet another letter, and so on and so
forth.  The algorithm continues until a word of length $n$ is
produced.  If no word of length $n$ avoids $\alpha$, the algorithm
returns the longest word avoiding $\alpha$.  If, on the other hand,
the algorithm produces words with similarity coefficient $< \alpha$
for longer and longer values of $n$, then we take this as evidence
that there exists an infinite word over a $k$-letter alphabet with
similarity coefficient $< \alpha$.  We performed this computation for
various alphabet sizes, and the results can be found in
Table~\ref{backtrack_table}.

\begin{table}
\begin{center}
\begin{tabular}{| c | c |}
\hline
Alphabet size & Similarity coefficient\\ \hline
3 & $0.888 < \alpha < 0.901$ (?) \\ \hline
4 & $0.690 < \alpha < 0.760$ (?) \\ \hline
5 & $0.590 < \alpha < 0.700$ (?) \\ \hline
6 & $0.500 < \alpha < 0.650$ (?) \\ \hline
7 & $0.450 < \alpha < 0.650$ (?) \\ \hline
8 & $0.400 < \alpha < 0.570$ (?) \\ \hline
\end{tabular}
\end{center}
\caption{Results of the backtracking algorithm \label{backtrack_table}}
\end{table}

For each lower bound reported in the table, we are certain that there
does not exist an infinite word with this similarity coefficient.
However, the upper bounds are only conjectural: the backtracking
algorithm described above produces long words with similarity
coefficient less than the stated bound, but we have no conclusive
proof that an infinite word exists.

In fact, we cannot produce a single explicit construction (with proof)
of an infinite word with similarity coefficient less than 1.  However,
computer calculations suggest that the so-called \emph{Dejean words}
seem to have fairly low similarity (though not nearly as low as the
values given in Table~\ref{backtrack_table}).  We now report the
results of our computer calculations on the words constructed by
Moulin-Ollagnier \cite{MO92} in order to verify Dejean's Conjecture
for small alphabet sizes.  For each alphabet size $k=3,\ldots,11$,
Moulin-Ollagnier constructed an infinite word over a $k$-letter
alphabet.  Each such word verified a conjecture of Dejean \cite{Dej72}
concerning the repetitions avoidable on a $k$-letter alphabet.  The
details of Dejean's Conjecture, and the precise nature of
Moulin-Ollagnier's construction can be found in his paper.  In
Table~\ref{ollagnier_table}, we report the largest similarity
coefficient found among all factors of Moulin-Ollagnier's words, up to
a certain length.  In the table, ``Prefix length'' is the length of
the prefix of the infinite word that we examined.  ``Factor length''
is the maximum length of the factors of this prefix that we examined.
A `-' signifies a continuous increase in similarity coefficient as the
lengths of the factors increase.

\begin{table}
\begin{center}
\begin{tabular}{| c | c | c | c |}
\hline
Alphabet size & Similarity coefficient & Prefix length & Factor length \\ \hline
3 & - & 2401 & 500\\ \hline
4 & 11/12 & 912 & 500\\ \hline
5 & 16/19 & 9261 & 399\\ \hline
6 & 10/13 & 9261 & 312\\ \hline
7 & - & 5000 & 218\\ \hline
8 & 12/15 & 5000 & 445\\ \hline
\end{tabular}
\end{center}
\caption{Results of computer calculations on
  Moulin-Ollagnier's words \label{ollagnier_table}}
\end{table}

Two natural problems suggest themselves:
\begin{enumerate}
\item Determine the similarity coefficients of Moulin-Ollagnier's
  words.
\item For each alphabet size $k$, determine the least similarity
  coefficient among all infinite words over a $k$-letter alphabet.
\end{enumerate}

The second question is likely quite difficult.  Even an answer just for the
$3$-letter alphabet would be nice to have.

\end{document}